\documentclass[reqno,11pt]{amsart}
\usepackage{amsmath}
\usepackage{amsthm}
\usepackage{amssymb}
\usepackage{epsfig}
\usepackage[all,dvips]{xy}
\usepackage{tikz}
\usepackage{enumerate}

\setlength{\textwidth}{6in}
\setlength{\textheight}{8.5in}
\hoffset=-.5in 
\voffset=-.25in

\newcommand{\Z}{\mathbb{Z}}

\newcommand{\D}{\mathcal{D}}

\theoremstyle{plain}
\newtheorem{thm}{Theorem}[section]

\newtheorem{cor}[thm]{Corollary}
\newtheorem{lem}[thm]{Lemma}

\theoremstyle{definition}
\newtheorem{defn}[thm]{Definition}

\newtheorem{example}[thm]{Example}

\theoremstyle{remark}

\title{Derangement Frequency in the Boolean Complex}

\author{K\'{a}ri Ragnarsson}
\address{Department of Mathematical Sciences, DePaul University, Chicago, Illinois}
\email{kragnars@math.depaul.edu}

\author{Bridget Eileen Tenner}
\address{Department of Mathematical Sciences, DePaul University, Chicago, Illinois}
\email{bridget@math.depaul.edu}

\subjclass[2010]{Primary 20F55; Secondary 05C99, 05E15, 06A07}

\begin{document}

\begin{abstract}

In previous work, we associated to any finite simple graph a particular set of derangements of its vertices.  These derangements are in bijection with the spheres in the wedge sum describing the homotopy type of the boolean complex for this graph.  Here we study the frequency with which a given derangement appears in this set.\\

\noindent \emph{Keywords.} derangement, finite simple graph, Coxeter system, boolean complex
\end{abstract}

\maketitle

\section{Introduction}

The boolean elements in the Bruhat order on a finitely generated Coxeter system were studied by the second author in \cite{tenner} and characterized by pattern avoidance in some cases. The boolean elements form a simplicial poset and hence form the face poset of a regular face complex, called the \emph{boolean complex} of the Coxeter system. In \cite{ragnarsson-tenner} we studied the homotopy type of this complex, proved that it has the homotopy type of a wedge of spheres of maximal dimension, and gave a recursive formula for computing the number of spheres using edge operations on the underlying, unlabeled Coxeter graph. In \cite{ragnarsson-tenner-2} we gave combinatorial meaning to these spheres. Taking graphs as a starting point, this involved assigning to any ordered graph a set of derangements of its vertex set, and constructing a homology class for each derangement in this set. The homology classes so constructed form a basis for the homology of the boolean complex, and consequently we get a bijection between the derangment sets and the spheres in the wedge sum describing the homotopy type of the boolean complex.

In the present note, we focus on statistical properties of the derangement sets defined in \cite{ragnarsson-tenner-2}. Given a finite ordered set $V$ and a derangement $w$ of $V$, we determine which graphs with vertex set $V$ admit $w$ in their derangement set. This allows us to calculate the frequency of the derangament $w$.

In this note, all graphs are understood to be finite simple graphs. By an \emph{ordered} graph we mean a graph with a total ordering of the vertex set. A \emph{derangement} of a set $V$ is a fixed-point free permutation of the elements of $V$.

\section{The derangement set of an ordered graph}\label{sec:review}
In this section we briefly recall some background material on derangement sets. This material is covered in greater detail in \cite{ragnarsson-tenner-2}, and here we focus only on the key points needed to perform the frequency calculations in Section~\ref{sec:stat}. 

In \cite[Section 3]{ragnarsson-tenner-2}, we describe an algorithm that for an ordered graph $G$ constructs a set $\D(G)$ of derangements of the vertex set of $G$. We refer to $\D(G)$ as the \emph{derangement set} of $G$. This algorithm is recursive, using edge operations to reduce to smaller graphs. We also give an explicit, closed-form description of $\D(G)$ in \cite[Theorem 3.11]{ragnarsson-tenner-2}. This description is used to compute the frequency of derangements in Section~\ref{sec:stat}, and so we take it as the definition of the derangement set in the current paper.

Some notation is necessary before we can state the definition

\begin{defn} \label{def:Canopy}
Let $G$ be an ordered graph and let $w$ be a permutation of its vertex set. For a vertex $t$ in $G$ set
\[ \rho_w(t) = \{t, w(t), \cdots, w^{k-1}(t) \} \, , \]
where $k$ is the smallest positive integer such that $ w^k(t) \leq t$, and set
\[  \lambda_w(t) = w^{-\ell}(t) \, , \]
where $\ell$ is the smallest positive integer such that $w^{-\ell}(t) \leq t$.
\end{defn}

Write $w$ in standard cycle form.  If $t$ is not the smallest element in its cycle, then $\lambda_w(t)$ is the first element appearing to the left of $t$ that is smaller than $t$, and $\rho_w(t)$ is the set of elements obtained by starting at $t$ and moving to the right until reaching an element less than $t$. When $t$ is the smallest element in its cycle, $\rho_w(t)$ is the set of elements in the entire cycle containing $t$, and $\lambda_w(t) = t$.

\begin{defn}
\label{defn:criterion}
Let $G$ be an ordered graph with vertex set $V$. The \emph{derangement set} of $G$, denoted $\D(G)$, is the set consisting of permutations $w$ of $V$ such for every vertex $t$ of $G$ the vertex $\lambda_w(t)$ is adjacent to some vertex in $\rho_w(t)$.
\end{defn}

It is not hard to check that every $w \in \D(G)$ is indeed a derangement, as the name indicates. Note that $\D(G)$ depends on the chosen ordering of the vertex set of $G$.

\begin{example}\label{ex:criterion}
Let $G$ be the 7-vertex graph depicted below.
\begin{center}
\begin{tikzpicture}
\draw (1,0) -- (4,0);
\draw (3,0) -- (3,1);
\draw (3,1) -- (5,1);
\foreach \x in {1,2,3,4} {\fill[black] (\x, 0) circle (2pt); \draw (\x,0) node[below] {$\x$};}
\foreach \x in {3,4,5} {\fill[black] (\x, 1) circle (2pt);}
\fill[black] (3,1) circle (2pt);
\draw (3,1) node[above] {$5$};
\draw (4,1) node[above] {$6$};
\draw (5,1) node[above] {$7$};
\end{tikzpicture}
\end{center}
It is easy to see that $(1234)(567) \in \D(G)$.  On the other hand, the derangement $(1234567) \not\in \D(G)$ because $\lambda_{(1234567)}(5) = 4$ and $\rho_{(1234567)}(5) = \{5,6,7\}$, but $4$ is not adjacent to $5$, $6$, or $7$ in the graph.  Similarly, the derangement $(13472)(56)$ is excluded from $\D(G)$ because $\lambda_{(13472)(56)}(3) = 1$ and $\rho_{(13472)(56)}(3) = \{3,4,7\}$, and $1$ is not adjacent to $3$, $4$, or $7$.
\end{example}

\section{Derangement frequency}\label{sec:stat}

We now compute the frequency of a derangement, defined as follows.

\begin{defn}
Given a derangement $w$ of a finite, totally ordered set $V$, let $f(w)$ be the number of simple ordered graphs $G$ with vertex set $V$ such that $w \in \D(G)$, and let 
 $$r(w) = \frac{f(w)}{2^{\binom{|V|}{2}}} \, .$$
The statistic $f(w)$ is the \emph{frequency} of $w$ and $r(w)$ is the \emph{rate} of $w$.
\end{defn}

The frequency of a derangement $w$ of the set $V$ describes how often $w$ appears in the derangement sets for graphs with vertex set $V$, while the rate of $w$ calculates the proportion of all graphs on $V$ with $w$ in their derangement sets.

Given a derangement $w$ of $V$, Definition~\ref{defn:criterion} requires that a graph $G$ with $w \in \D(G)$ must contain at least one edge in the following sets for each vertex $t$ in $G$.

\begin{defn}
Given a vertex $t$ in the finite simple ordered graph $G$ and a $w \in \D(G)$, let $E_{w,t} = \left\{\{\lambda_w(t), s\} : s \in \rho_w(t)\right\}$.
\end{defn}

Note that when $t$ is not minimal in its $w$-cycle, we have $\lambda_w(t) < s$ for all $s \in \rho_w(t)$.  On the other hand, when $t$ is minimal in its $w$-cycle, we have $t = \lambda_w(t) < s$ for all $s \in \rho(w) \setminus \{t\}$. These cases often have to be treated separately, and the ensuing discussion is simplified by the following definition.

\begin{defn}
Give a derangement $w$ of an ordered set $V$, let $U(w)$ be the set of elements that are not minimal in their $w$-cycles.
\end{defn}

Calculating the frequency of a derangement amounts to counting the possible edge sets that contain at least one edge in each $E_{w,t}$.  This is greatly simplified by the following lemma. The proof of the lemma uses the observation following Definition~\ref{def:Canopy}. 

\begin{lem}\label{lem:disjoint edge requirements}
Let $w$ be a derangement of a totally ordered set $V$. 
\begin{enumerate} [(a)]
  \item For distinct $t \in U(w)$, the sets $E_{w,t}$ are disjoint.
  \item If $s$ is minimal in its $w$-cycle then $E_{w,w(s)} \subseteq E_{w,s}$.
\end{enumerate} 
\end{lem}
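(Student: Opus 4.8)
The plan is to work entirely with the cycle-word description recorded after Definition~\ref{def:Canopy}: writing each cycle of $w$ with its minimum first, for $t \in U(w)$ the element $\lambda_w(t)$ is the nearest entry smaller than $t$ sitting to the left of $t$, while $\rho_w(t)$ consists of $t$ together with the block of entries immediately to its right that are all larger than $t$, terminating just before the first entry smaller than $t$. The key preliminary observation I would record is that every edge of $E_{w,t}$ has $\lambda_w(t)$ as one endpoint, and that when $t \in U(w)$ we have $\lambda_w(t) < t \le s$ for each $s \in \rho_w(t)$; hence in any edge of $E_{w,t}$ the vertex $\lambda_w(t)$ is the \emph{smaller} endpoint. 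This is what lets me read structural data off of a single shared edge.

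For part (a), I would argue by contradiction. Suppose $t,t' \in U(w)$ are distinct but some edge $e$ lies in $E_{w,t} \cap E_{w,t'}$. Comparing the two endpoints of $e$ and using the preceding observation, the smaller endpoint must be simultaneously $\lambda_w(t)$ and $\lambda_w(t')$, so $\lambda_w(t) = \lambda_w(t') =: a$, and the larger endpoint $b$ lies in $\rho_w(t) \cap \rho_w(t')$. Now $t$ and $t'$ both occur to the right of $a$ in the cycle and both have $a$ as their nearest-smaller-to-the-left entry; writing them as $t = w^{\ell}(a)$ and $t' = w^{\ell'}(a)$, I may assume $\ell < \ell'$, so $t$ sits strictly between $a$ and $t'$. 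Since every entry strictly between $a$ and $t'$ exceeds $t'$ (that is what makes $a = \lambda_w(t')$), this forces $t > t'$. On the other hand, $b \in \rho_w(t')$ places $b$ at or to the right of $t'$, while $b \in \rho_w(t)$ places $b$ strictly to the left of the first entry below $t$; combining these inequalities on positions shows $t'$ itself lies inside the all-larger-than-$t$ block to the right of $t$, whence $t' > t$. These two conclusions contradict each other, so $t = t'$.

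For part (b), the containment is essentially immediate from the two extreme behaviors of the definition. If $s$ is minimal in its cycle then $\lambda_w(s) = s$ and $\rho_w(s)$ is the entire cycle, so $E_{w,s} = \{\{s,x\} : x \in \rho_w(s)\}$. Its successor $w(s)$ is not minimal, and the entry immediately to the left of $w(s)$ is $s < w(s)$, so $\lambda_w(w(s)) = s$ as well; moreover $\rho_w(w(s))$ is contained in the cycle of $s$, which equals $\rho_w(s)$. Therefore each edge $\{s, x\} \in E_{w,w(s)}$ has $x \in \rho_w(w(s)) \subseteq \rho_w(s)$ and so lies in $E_{w,s}$, giving $E_{w,w(s)} \subseteq E_{w,s}$.

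The only genuinely delicate step is the positional bookkeeping in part (a): one must keep straight, simultaneously, that $a$ is the nearest smaller entry to the left of both $t$ and $t'$ and that $b$ lies in both right-blocks, and then extract the opposing inequalities $t > t'$ and $t' > t$. Everything else, including all of part (b), is a direct unwinding of Definition~\ref{def:Canopy} via the cycle-word picture.
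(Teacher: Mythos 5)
Your proof is correct and takes essentially the same route as the paper: both rest on the observation that for $t \in U(w)$ every edge of $E_{w,t}$ has $\lambda_w(t)$ as its smaller endpoint, so a shared edge forces $\lambda_w(t) = \lambda_w(t')$ and an overlap of $\rho_w(t)$ with $\rho_w(t')$, which the cycle-word positions rule out; your part (b) matches the paper's verbatim in substance. The only cosmetic difference is that you run part (a) as a contradiction extracting $t > t'$ and $t' > t$, where the paper directly shows $\rho_w(t) \cap \rho_w(t') = \emptyset$.
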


\begin{proof}
We consider $w$ written in standard cycle form.
\begin{enumerate} [(a)]
  \item Consider $t, t' \in U(w)$.  If $t$ and $t'$ are in difference $w$-cycles, then certainly $E_{w,t} \cap E_{w,t'} = \emptyset$.  Now suppose that $t$ and $t'$ are in the same cycle of $w$, and that $t < t'$.  To have $\lambda_w(t) = \lambda_w(t')$, we must have $t'$ appearing to the left of $t$ when written in standard cycle form.  But then the set of elements $\rho_w(t')$ is a string of consecutive elements beginning with $t'$, extending to the right, and stopping before reaching $t$.  Thus $\rho_w(t') \cap \rho_w(t) = \emptyset$, so again $E_{w,t} \cap E_{w,t'} = \emptyset$.
  \item Let $s$ be minimal in its $w$-cycle.  Then $\lambda_w(s) = s$ and $\rho_w(s)$ is the entire cycle containing $s$.  Certainly, then, $\rho_w(w(s)) \subseteq \rho_w(s)$.  Moreover, $\lambda_w(w(s))$ is the first element less than $w(s)$ which appears to the left of $w(s)$.  The element immediately to the left of $w(s)$ is necessarily $s$, and $s$ is the minimal element in the cycle so we must have that $s < w(s)$.  Therefore $\lambda_w(w(s)) = s = \lambda_w(s)$, so $E_{w,w(s)} \subseteq E_{w,s}$.
\end{enumerate}
\end{proof}

Lemma~\ref{lem:disjoint edge requirements} shows that to construct a graph $G$ with $w \in \D(G)$, one can independently choose edges from the sets $E_{w,t}$ with $t \in U(w)$ and in doing so also end up with edges from the sets $E_{w,s}$ with $s \notin U(w)$. From this it is straightforward to derive the following description of the frequency and rate of a derangement.

\begin{thm}\label{thm:frequency}
Let $w$ be a derangement of a totally ordered set $V$.  Then the frequency of $w$ is
\begin{eqnarray*}
f(w) &=& 2^{\binom{|V|}{2} - \sum_{t \in U(w)} |\rho_w(t)|} \cdot \prod_{t \in U(w)} \left(2^{|\rho_w(t)|} - 1\right)\\
&=& 2^{\binom{|V|}{2}} \cdot \prod_{t \in U(w)} \left(1 - \frac{1}{2^{|\rho_w(t)|}}\right),
\end{eqnarray*}
and the rate of $w$ is
$$r(w) = \prod_{t \in U(w)} \left(1 - \frac{1}{2^{|\rho_w(t)|}}\right).$$
\end{thm}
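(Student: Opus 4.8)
The plan is to reduce the counting problem to a product of independent choices, one for each $t \in U(w)$, using both parts of Lemma~\ref{lem:disjoint edge requirements}. By Definition~\ref{defn:criterion}, a graph $G$ on $V$ satisfies $w \in \D(G)$ precisely when, for every vertex $t$, at least one edge of $E_{w,t}$ lies in $G$. First I would argue that it suffices to impose this condition only for the vertices $t \in U(w)$. Indeed, if $s \notin U(w)$, then $s$ is minimal in its $w$-cycle, and since $w$ has no fixed points, $w(s) \neq s$ is not minimal, so $w(s) \in U(w)$. Part (b) of the lemma gives $E_{w,w(s)} \subseteq E_{w,s}$, so any $G$ containing an edge of $E_{w,w(s)}$ automatically contains an edge of $E_{w,s}$. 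Hence the requirements indexed by $V \setminus U(w)$ are redundant, and $w \in \D(G)$ if and only if $G$ contains at least one edge from each $E_{w,t}$ with $t \in U(w)$.

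Next I would use part (a): the families $\{E_{w,t}\}_{t \in U(w)}$ are pairwise disjoint. Moreover, for $t \in U(w)$ we have $\lambda_w(t) < s$ for every $s \in \rho_w(t)$, so the map $s \mapsto \{\lambda_w(t), s\}$ is injective and produces genuine (non-loop) edges; thus $|E_{w,t}| = |\rho_w(t)|$. This lets me partition the $\binom{|V|}{2}$ potential edges on $V$ into the disjoint blocks $E_{w,t}$, $t \in U(w)$, together with the remaining $\binom{|V|}{2} - \sum_{t \in U(w)} |\rho_w(t)|$ free edges. A graph $G$ is then specified by choosing, independently, an arbitrary subset of the free edges and, for each $t \in U(w)$, a nonempty subset of $E_{w,t}$.

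Counting these independent choices gives the first displayed formula directly: the free edges contribute a factor $2^{\binom{|V|}{2} - \sum_{t \in U(w)} |\rho_w(t)|}$, and each block $E_{w,t}$ contributes a factor $2^{|\rho_w(t)|} - 1$, the number of nonempty subsets of a set of size $|\rho_w(t)|$. The second form of $f(w)$ follows by distributing the factor $2^{-\sum_{t \in U(w)} |\rho_w(t)|} = \prod_{t \in U(w)} 2^{-|\rho_w(t)|}$ into the product, rewriting $2^{-|\rho_w(t)|}\left(2^{|\rho_w(t)|} - 1\right) = 1 - 2^{-|\rho_w(t)|}$, and the formula for $r(w)$ is then immediate from its definition.

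As for difficulty, there is no serious obstacle once the lemma is in hand: the only point requiring care is the reduction in the first paragraph, namely verifying that the constraints coming from non-$U(w)$ vertices are genuinely subsumed, so that counting over $U(w)$ alone is both necessary and sufficient. Everything after that is bookkeeping of disjoint, independent choices together with routine algebra.
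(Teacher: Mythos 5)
Your proposal is correct and follows the same route as the paper: the reduction to the vertices in $U(w)$ via part (b) of Lemma~\ref{lem:disjoint edge requirements} is exactly the observation the paper records just before the theorem, and the count via disjoint blocks plus free edges is the paper's proof verbatim in spirit. You simply make explicit two points the paper leaves implicit, namely that $|E_{w,t}| = |\rho_w(t)|$ and that the non-$U(w)$ constraints are subsumed rather than merely compatible.
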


\begin{proof}
There are $\binom{|V|}{2}$ possible edges in a graph with vertex set $V$.  Only $\sum_{t \in U(w)}|\rho_w(t)|$ of these are elements of
$$\bigcup_{t \in U(w)} E_{w,t}.$$
Thus the remaining edges can be in the graph or not in the graph, yielding the factor 
$$2^{\binom{|V|}{2} - \sum_{t \in U(w)} |\rho_w(t)|}.$$
For each $t \in U(w)$, a nonempty subset of the edges in $E_{w,t}$ must be present in the graph, yielding the factor $2^{|\rho_w(t)|} - 1$.

The rate of $w$ is obtained by dividing the frequency by the total number of graphs with vertex set $V$.
\end{proof}

\begin{example}
Consider $w = (13472)(56)$.  Then
$$f(w) = 2^{14} \left(2^1 - 1 \right)\left(2^3 - 1\right)\left(2^2 -1\right)\left(2^1-1\right)\left(2^1 -1 \right) = 2^{14} \cdot 21$$
and
$$r(w) = \frac{21}{2^7}.$$
This means that there are $2^{14} \cdot 21$ ordered graphs $G$ for which $w \in \D(G)$, and that of all ordered graphs $G$ on $7$ vertices, $21/2^7$ of them have $w \in \D(G)$.
\end{example}

The frequency and rate function are, in a certain sense, increasing with respect to the sizes of the sets $\rho_w(t)$. To make this precise, we define the following objects.

\begin{defn}
Let $\D^k(V)$ be the set of derangements of $V$ with $k$ disjoint cycles. For $w \in \D^k(V)$ let $\theta(w) \in (\Z_+)^{|V|-k}$ be the $(|V|-k)$-tuple consisting of the numbers $\{|\rho_w(t)|: t \in U(w)\}$, written in non-decreasing order. We order the $(|V|-k)$-tuples $\{\theta(w): w \in \D^k(V)\}$ via the cartesian order.
\end{defn}

Using this as a basis for comparing derangements, we obtain the following monotonicity result as a direct consequence of Theorem \ref{thm:frequency}.

\begin{cor}\label{cor:FreqIncr}
Let $V$ be an ordered set and $w_1,w_2 \in \D^k(V)$ for some $k$.  If $\theta(w_1) \leq \theta(w_2) $ then $f(w_1) \leq f(w_2)$ and $r(w_1) \leq r(w_2)$.
\end{cor}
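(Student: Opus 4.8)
The plan is to read off the rate function from Theorem~\ref{thm:frequency} and show that it is monotone with respect to the cartesian order on the tuples $\theta(w)$. Recall from the theorem that
\[
r(w) = \prod_{t \in U(w)} \left(1 - \frac{1}{2^{|\rho_w(t)|}}\right),
\]
and that $f(w) = 2^{\binom{|V|}{2}} \cdot r(w)$, so it suffices to prove the inequality for $r$; the statement for $f$ then follows immediately by multiplying through by the positive constant $2^{\binom{|V|}{2}}$.

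First I would observe that the single-variable function $g(n) = 1 - 2^{-n}$ is positive and strictly increasing in the positive integer $n$, since $2^{-n}$ is strictly decreasing. Next, I would note that for any $w \in \D^k(V)$, the index set $U(w)$ has exactly $|V| - k$ elements (each cycle of length $m$ contributes $m-1$ non-minimal vertices, and summing over the $k$ cycles gives $|V| - k$), so the tuple $\theta(w)$ genuinely lives in $(\Z_+)^{|V|-k}$ and the rate can be rewritten purely in terms of the coordinates of $\theta(w)$:
\[
r(w) = \prod_{i=1}^{|V|-k} g\bigl(\theta(w)_i\bigr).
\]
This rewriting is the crucial point, because it shows $r(w)$ depends on $w$ only through the multiset $\{|\rho_w(t)| : t \in U(w)\}$, i.e.\ only through $\theta(w)$, and the product is manifestly symmetric so the choice of non-decreasing ordering does not matter.

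With this reformulation, the corollary reduces to an elementary fact: a product of strictly positive, coordinatewise non-decreasing factors is monotone under the cartesian (product) order. Concretely, if $\theta(w_1) \leq \theta(w_2)$ coordinatewise, then $g(\theta(w_1)_i) \leq g(\theta(w_2)_i)$ for each $i$ by the monotonicity of $g$, and since all factors are positive, taking the product over $i$ preserves the inequality, giving $r(w_1) \leq r(w_2)$. Multiplying by $2^{\binom{|V|}{2}}$ yields $f(w_1) \leq f(w_2)$.

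I do not anticipate a genuine obstacle here, as the result is advertised as a direct consequence of Theorem~\ref{thm:frequency}; the only point requiring a moment's care is confirming that $w_1$ and $w_2$ have tuples of the same length, which is exactly why the hypothesis restricts to a common $\D^k(V)$. Since both derangements have the same number $k$ of cycles, both tuples have length $|V| - k$, so the coordinatewise comparison underlying the cartesian order is well posed and the product comparison is legitimate.
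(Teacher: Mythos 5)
Your proof is correct and follows exactly the route the paper intends: the paper gives no separate argument, simply declaring the corollary ``a direct consequence of Theorem~\ref{thm:frequency},'' and your write-up supplies precisely the missing details (monotonicity and positivity of $g(n) = 1 - 2^{-n}$, the equal tuple lengths $|V|-k$, and the symmetry of the product). Nothing further is needed.
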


\begin{example}
Let $w = (13472)(56)$, and let $w' = (13427)(56)$.  Then $\theta(w) = (1,1,1,2,3)$ and $\theta(w') = (1,1,1,2,2)$, so $\theta(w) > \theta(w')$.  Also, $f(w) = 172032$ and $f(w') = 147456$, while $r(w) = .08203125$ and $r(w') = .0703125$.
\end{example}

\section{Extremal examples}

We now give examples of the frequency and rate functions for some extremal cases.  In each of these, the vertex set is $V = \{1, \ldots, n\}$.

We begin by examining rare derangements; that is, derangements with minimal frequency.

\begin{example}\label{ex:fixed number of cycles}
For derangements with a fixed number of cycles, $\theta(w)$ is minimized when every vertex that is not minimal in its cycle is followed by a smaller vertex.  That is, suppose that $\rho_w(t) = \{t\}$ for each $t \in U(w)$, meaning that each cycle can be written as a decreasing sequence.  Then the frequency of $w$ is
$$\frac{2^{\binom{n}{2}}}{2^{|U(w)|}}$$
and the rate of $w$ is
$$\frac{1}{2^{|U(w)|}}.$$
If we allow the number of cycles to vary, then this value is minimized if there is only one value of $t$ which is minimal in its cycle, meaning that the standard cycle form of $w$ has exactly one cycle.  In other words, this is minimized for $w = (1n \cdots 432)$:
$$f((1n \cdots 432)) = \frac{2^{\binom{n}{2}}}{2^{n -1}} \text{\ \ \ and\ \ \ } r((1n \cdots 432)) = \frac{1}{2^{n -1}}.$$
On the other hand, the derangements with the highest rate in this situation are those where the standard cycle form of $w$ has as many cycles as possible, namely $\lfloor n/2 \rfloor$ cycles, in which case the rate of $w$ is
\begin{equation}\label{eqn:maximizing}
\frac{1}{2^{\lceil n/2 \rceil}}.
\end{equation}
\end{example}

Complementary to the previous example, let us now consider persistent derangements; that is, derangements with maximal frequency.

\begin{example}\label{ex:fixed cycle lengths}
Suppose that the cycle lengths in the standard cycle form of $w$ are $c_1, c_2, \ldots$.  If, for each non-minimal element in each cycle, $|\rho_w(x)|$ is maximal, meaning that each cycle can be written as an increasing sequence, then the frequency of $w$ is
$$2^{\binom{n}{2}} \cdot \prod_i \prod_{k=1}^{c_i-1} \left(1 - \frac{1}{2^k}\right).$$
and the rate of $w$ is
$$\prod_i \prod_{k=1}^{c_i-1} \left(1 - \frac{1}{2^k}\right).$$
This value is maximized if there is only one cycle in the standard form of $w$, and hence $c_1 = n$.  In other words, this is maximized for $w = (1234\cdots n)$:
$$f((1234\cdots n)) = \prod_{k=1}^{n-1} \left(2^k - 1\right) \text{\ \ \ and\ \ \ } r((1234\cdots n)) = \frac{\prod_{k=1}^{n-1} \left(2^k - 1\right)}{2^{\binom{n}{2}}}.$$
On the other hand, the derangements with the lowest rate in this situation are those where the standard cycle form of $w$ has as many cycles as possible, meaning that all cycles are transpositions except possibly for a single $3$-cycle in the case when $n$ is odd.  If $n$ is even, then the rate of $w$ is
\begin{equation}\label{eqn:minimizing even}
\frac{1}{2^{n/2}},
\end{equation}
whereas if $n$ is odd, then the rate of $w$ is
\begin{equation}\label{eqn:minimizing odd}
\frac{3}{2^{(n+1)/2}}.
\end{equation}
\end{example}

Notice that varying the number of cycles has the opposite effect in Examples~\ref{ex:fixed number of cycles} and~\ref{ex:fixed cycle lengths}.
  Thus we cannot do much to improve Corollary~\ref{cor:FreqIncr}.  Also, observe that expressions \eqref{eqn:maximizing} and \eqref{eqn:minimizing even} agree (when $n$ is even), as they ought to do.  Expressions \eqref{eqn:maximizing} and \eqref{eqn:minimizing odd} disagree (when $n$ is odd), because the $3$-cycle in the former situation can be written in decreasing order, whereas the $3$-cycle in the latter situation can be written in increasing order.  Stated another way, the least frequently occurring derangement is
$$(1n\cdots 432)$$
and the most frequently occurring derangement is
$$(1234\cdots n).$$

\end{document}